\providecommand{\U}[1]{\protect\rule{.1in}{.1in}}
\newtheorem{theorem}{Theorem}[section]
\newtheorem{corollary}[theorem]{Corollary}
\newtheorem{definition}[theorem]{Definition}
\def\]{{\Big]}}
\def\[{{\Big[}}
\def\bd{\begin{definition}}
\def\ed{\end{definition}}
\def\bp{\begin{proposition}}
\def\ep{\end{proposition}}
\def\bc{\begin{corollary}}
\def\ec{\end{corollary}}
\def\bx{\begin{Examples}}
\def\ex{\end{Examples}}
\def\ba{{\begin{align}}
\def\ea{\end{align}}}
\def\geq{\geqslant}
\def\leq{\leqslant}
\def\^{\widehat}
\def\ba{\begin{aligned}}
\def\ea{\end{aligned}}
\def\be{\begin{equation}}
\def\ee{\end{equation}}
\def\ben{\begin{align*}}
\def\enn{\end{align*}}
\newcommand{\Rmnum}[1]{\expandafter\@slowromancap\romannumeral #1@}
\numberwithin{equation}{section}
\title{\bf{The average genus for bouquets of circles and  dipoles}}
\author{ Jinlian Zhang$^\dag$, Xuhui Peng$^\spadesuit$, Yichao Chen$^\dag$ \\
{\em\small $^\dag$College of Mathematics and Econometrics, Hunan University, 410082 Changsha, China}\\
{\em\small $^\spadesuit$MOE-LCSM, School of mathematics and statistics,  Hunan Normal University }\\
{\em\small Changsha  410081, China}\\
 }
 \date{}
\begin{document}

\maketitle

\let\thefootnote\relax\footnotetext{$^\spadesuit$
Corresponding author.}

 \let\thefootnote\relax\footnotetext{
  Email:jinlian916@hnu.edu.cn(J.Zhang),xhpeng@hunnu.edu.cn(X.Peng),yichen@hnu.edu.cn(Y.Chen)
 }

\begin{abstract}
\noindent The bouquet of circles $B_n$ and dipole graph $D_n$ are two important classes of graphs in topological graph theory.
For  $n\geq 1$, we  give an explicit   formula for  the  average genus $\gamma_{avg}(B_n)$ of $B_n$.
By this   expression, one easily sees $\gamma_{avg}(B_n)=\frac{n-\ln n-c+1-\ln 2}{2}+o(1)$, where $c$ is the Euler constant.   Similar results are obtained for   $D_n$.
Our method is new  and deeply  depends on    the  knowledge  in  ordinary differential equations.
\vskip0.5cm\noindent{\bf Keywords:} Average genus; Bouquet of circles;  Dipole; Ordinary differential equations
\vspace{1mm}\\
\noindent{{\bf MSC 2000:}  05C10}
\end{abstract}
\section{Introduction and Main Results}
A \emph{graph} $G = (V(G), E(G))$ is permitted to have
both loops and multiple edges. A \textit{embedding} of a graph  $G$  into an orientable surface $O_k$ is a \textit{{cellular} embedding}, i.e., the interior of every face is homeomorphic to an open disc.
We denote the number of {cellular}  embeddings of $G$ on the surface $O_k$ by $g_k(G)$, where, by the \textit{number of embeddings}, we mean the number of equivalence classes under ambient isotopy.
The \emph{genus polynomial} of a graph  $G$  is given by
$$\Gamma_{G}(x)=\sum_{k\geq 0}g_k(G)x^k,$$
 This sequence  $\{g_k(G),k=0,1,2,\cdots,\}$ is called the \emph{genus distribution} of the graph $G$.

 The \emph{average genus} $\gamma_{avg}(G)$  of the graph $G$ is the expected value of the genus random variable, over all labeled 2-cell orientable embeddings of $G$, using the uniform distribution.
 In other words, the average genus of  $G$ is
  \begin{eqnarray*}
    \gamma_{avg}(G)&=&\frac{\Gamma_{G}'(1)}{\Gamma_{G}(1)}= \sum_{k=0}^{\infty }k\cdot \frac{g_k(G)}{\Gamma_{G}(1)}.
    \end{eqnarray*}
 The  study of the average genus of a graph  began by
Gross and Furst \cite{GF87},
and much further developed by Chen and Gross \cite{CJ92,CJ93,Chen94}.
 Two lower bounds were  obtained in \cite{CGR}  for the average genus of  two kinds of  graphs.
  In \cite{SS91}, Stahl gave the asymptotic result for average genus of linear graph families.
   The
  exact value for the average genus of  small-order
complete graphs,  closed-end ladders, and cobblestone paths were   derived   by White \cite{White}.
More  references are the following:
\cite{CLH,GKR93,MT16,SS83,SS95}  etc.  For general background in topological graph theory, we refer the readers to see Gross and Tucker \cite{GT87} or White \cite{WAT84}.

One   objective of this paper is to  give an explicit   expression of  the average genus for    a bouquet of circles.
By a  \emph{bouquet of circles}, or more briefly, a bouquet, we mean a graph with one vertex and some self-loops.    In particular, the bouquet with  $n$ self-loops is denoted $B_n$.
 Figure \ref{fig1} demonstrates the graphs $B_1,B_2,B_3.$
 The bouquets $\{B_n,n\geq 1\}$ are  very important graphs in 	topological graph theory.
First,  since any connected graph can be reduced to a bouquet by contracting a spanning tree to a point,  bouquets are fundamental building blocks of topological graph theory.
Second, as that  demonstrated  in  \cite{GJ77,GT77}, Cayley graphs and many other regular graphs are covering spaces of bouquets.

\begin{figure}[h]
  \centering
  \includegraphics[width=0.50\textwidth]{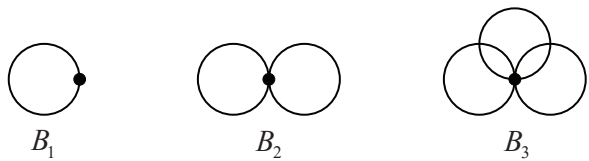}
  \caption{The bouquets $B_{1},B_{2},$ and $B_{3}.$}
  \label{fig1}
\end{figure}
For the genus distribution of $B_n$,
Gross, Robbins and  Tucker \cite{GR89} proved that  the numbers $g_m(B_n)$ of embeddings of the  $B_n$ in the oriented surface of genus $m$  satisfy the following   recurrence for $n>2,$
\begin{equation}
\label{1-3}
\begin{split}
(n+1)g_{m}(B_n)&=4(2n-1)(2n-3)(n-1)^2(n-2)g_{m-1}(B_{n-2})
 \\ &  \quad +4(2n-1)(n-1)g_{m}(B_{n-1})
 \end{split}
 \end{equation}
 with initial conditions
 \begin{eqnarray*}
&&  g_m(B_0)=1 \text{ for } m=0   \text{ and } g_m(B_0)=1 \text{ for } m>0,
 \\ &&
  g_m(B_1)=1 \text{ for } m=0  \text{ and } g_m(B_1)=1 \text{ for } m>0.
 \end{eqnarray*}
 With an aid of edge-attaching surgery technique, the  total embedding polynomials of $B_n$ was computed in  \cite{KS02}.
 Stahl \cite{SS90} also  did some researches on the average genus of $B_n$.
 By  \cite[Theorem 2.5]{SS90} and Euler formula,  one easily sees that
 \begin{eqnarray}
 \label{3-1}
   \lim_{n\rightarrow \infty}\Big(\gamma_{avg}(B_n)-\big(\frac{n+1}{2}-\frac{1}{2}\sum_{k=1}^{2n}\frac{1}{k}
   \big)\Big)=0.
 \end{eqnarray}
To achieve this,  Stahl
made many accurate estimates on the unsigned Stirling numbers $s(n,k)$ of the first kind.
In this paper, using    knowledge  in  ordinary differential equations and  Taylor formulas, we derive an  explicit   expression of $\gamma_{avg}(B_n).$
 By this expression,  (\ref{3-1}) follows 	immediately.
 Our methods are totally different from   that in \cite{SS90} and we don't need to make estimates on $s(n,k)$.
 In Section 2, we will give the computation of $\gamma_{avg}(B_n)$ in detail.

Another    objective of this paper is to  give an explicit   expression of  the average genus for    \emph{dipoles} $D_n$  (two vertices, $n-$multiple edges).
Figure \ref{fig2} demonstrates the graphs $D_1,D_2,D_3.$

\

\begin{figure}[h]
  \centering
  \includegraphics[width=0.35\textwidth]{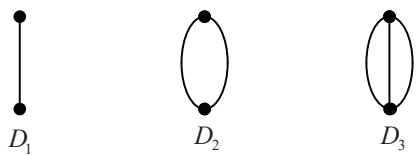}
  \caption{The dipoles $D_{1},D_{2},$ and $D_{3}.$}
  \label{fig2}
\end{figure}

The dipole, like the bouquet, is useful as a voltage graph. See \cite{WAT84} for example.
Moreover, hypermaps  correspond with the 2-cell embeddings of the dipole.
The genus distributions of $D_n$ are given by
  \cite{KS02} and \cite{Ri87}.
  In this paper, the calculation of $\gamma_{avg}(D_n)$ are similar as that in $\gamma_{avg}(B_n).$
  But the processes  are  more complicated, so we  still give their  details in Section 3.

It is not difficult to obtain    the following recurrence relation for $\gamma_{avg}(B_n)$
  \begin{eqnarray*}
  \gamma_{avg}(B_n)&=& \frac{2}{n+1}\gamma_{avg}(B_{n-1})+\frac{n-1}{n+1}\big(\gamma_{avg}(B_{n-2})+1\big).
\end{eqnarray*}
Since the coefficients  before $\gamma_{avg}(B_{n-2})$
and
$\gamma_{avg}(B_{n-1})$ depend  on $n$,
there are little tools  to obtain the explicit   expression of $\gamma_{avg}(B_n)$.
In this paper, we give a method to get explicit   expression of $\gamma_{avg}(B_n)$.
Our method is new  and deeply  depends on    the  knowledge  in  ordinary differential equations and series theory.
\section{The average genus of $B_n$}
The main objective of this section  is to prove the following  theorem.
\begin{theorem}
\label{1-2}
The average genus of $B_n$ is given by
 \begin{eqnarray*}
  &&\gamma_{avg}(B_n)=\frac{n+1}{2}-\sum_{m=0}^{n-1} \frac{1+(-1)^m}{2(m+1)}-\frac{1+(-1)^n }{4(n+1)}.
\end{eqnarray*}
In particular,  we have   $$\gamma_{avg}(B_n)=\frac{n-\ln n-c+1-\ln 2}{2}+
o(1),$$ where $c\approx 0.5772$ is the Euler constant.
\end{theorem}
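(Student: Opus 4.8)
The plan is to encode the recurrence for $a_n:=\gamma_{avg}(B_n)$ into a single first-order linear ODE for its ordinary generating function, solve that ODE in closed form, and then read off the coefficients. I first record the starting data $a_0=a_1=0$ (both $B_0$ and $B_1$ embed only in the sphere) and rewrite the recurrence in cleared form
\begin{equation*}
(n+1)a_n=2a_{n-1}+(n-1)a_{n-2}+(n-1),\qquad n\geq 2.
\end{equation*}
Setting $F(x)=\sum_{n\geq 0}a_n x^n$, multiplying by $x^n$ and summing over $n\geq 2$, and using $\sum_n (n+1)a_n x^n=(xF)'=F+xF'$ together with $\sum_{n\geq 2}(n-1)x^n=x^2/(1-x)^2$, all boundary terms cancel precisely because $a_0=a_1=0$, and the recurrence collapses to
\begin{equation*}
x(1-x^2)F'(x)+(1-2x-x^2)F(x)=\frac{x^2}{(1-x)^2}.
\end{equation*}

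Next I would solve this by an integrating factor. The partial-fraction expansion
\begin{equation*}
\frac{1-2x-x^2}{x(1-x^2)}=\frac{1}{x}-\frac{1}{1-x}-\frac{1}{1+x}
\end{equation*}
yields the integrating factor $\mu(x)=x(1-x)/(1+x)$, after which the equation reduces to $(\mu F)'=x^2/(1-x^2)^2$. The right side integrates elementarily (write $x^2/(1-x^2)^2=1/(1-x^2)^2-1/(1-x^2)$ and decompose $1/(1-x^2)^2$ into simple fractions), and the requirement that $F$ vanish to second order at $x=0$ (forced by $a_0=a_1=0$) fixes the constant of integration as $0$. This gives the closed form
\begin{equation*}
F(x)=\frac{1}{2(1-x)^2}-\frac{1+x}{4x(1-x)}\ln\frac{1+x}{1-x}.
\end{equation*}

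Extracting the coefficient of $x^n$ then finishes the exact formula. The first term contributes $(n+1)/2$. For the second term I would use $\ln\frac{1+x}{1-x}=2\sum_{k\geq 0}x^{2k+1}/(2k+1)$ and $\frac{1+x}{1-x}=1+2\sum_{j\geq 1}x^j$, so that $\frac{1+x}{4x(1-x)}\ln\frac{1+x}{1-x}=\tfrac12\bigl(1+2\sum_{j\geq1}x^j\bigr)\sum_{k\geq0}\frac{x^{2k}}{2k+1}$; its coefficient of $x^n$ comes out to $\sum_{2k<n}\frac{1}{2k+1}+\frac{1+(-1)^n}{4(n+1)}$, where the correction arises from the single diagonal term $2k=n$. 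Since $\frac{1+(-1)^m}{2}$ is the indicator that $m$ is even, the sum over $2k<n$ equals $\sum_{m=0}^{n-1}\frac{1+(-1)^m}{2(m+1)}$, and the stated formula follows. For the asymptotic I would apply the identity $\sum_{2k<n}\frac{1}{2k+1}=H_{2M+2}-\tfrac12 H_{M+1}$ with $M=\lfloor(n-1)/2\rfloor$ and $H_L=\sum_{i=1}^{L}1/i$, together with $H_L=\ln L+c+o(1)$; since $M+1\sim n/2$, this gives $\sum_{2k<n}\frac{1}{2k+1}=\tfrac12\ln n+\tfrac12\ln 2+\tfrac{c}{2}+o(1)$, while $\frac{1+(-1)^n}{4(n+1)}=o(1)$, producing $\frac{n-\ln n-c+1-\ln 2}{2}+o(1)$.

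The main obstacle is the ODE stage. Deriving the equation depends on the boundary terms vanishing, which is exactly the compatibility between $a_0=a_1=0$, the $1/x$ factor in $\mu$, and $F$ vanishing to order two; and solving it cleanly hinges on the two partial-fraction decompositions and the elementary but error-prone integration of $x^2/(1-x^2)^2$. The only other delicate point is the parity bookkeeping in the coefficient extraction, namely isolating the diagonal term $2k=n$ that produces the $\frac{1+(-1)^n}{4(n+1)}$ correction.
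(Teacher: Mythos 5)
Your proposal is correct and follows essentially the same route as the paper: you derive the identical first-order linear ODE $x(1-x^2)F'+(1-2x-x^2)F=\frac{x^2}{(1-x)^2}$ for the generating function of $\gamma_{avg}(B_n)$, solve it in closed form (your integrating factor is just the explicit form of the paper's variation-of-parameters step, and your $F$ agrees with the paper's $u$), and extract coefficients by a Cauchy product. The only cosmetic difference is that you expand via $\ln\frac{1+x}{1-x}$ as an odd-index series where the paper splits into $\ln(1-t)$ and $\ln(1+t)$ separately, and you spell out the asymptotic step that the paper leaves implicit.
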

\begin{proof}
Multiplying both sides of (\ref{1-3})
 by $x^m$,  it holds that
 \begin{equation}
\begin{split}
\sum_{m\geq 0}(n+1)g_{m}(B_n)x^m&=\sum_{m\geq 0}4(2n-1)(2n-3)(n-1)^2(n-2)g_{m-1}(B_{n-2})x^m
 \\ &  \quad +\sum_{m\geq 0}4(2n-1)(n-1)g_{m}(B_{n-1})x^m.
 \end{split}
 \end{equation}
 Hence,  the  genus polynomial $\Gamma_{B_n}(x)$ satisfy the following   recurrence
\begin{eqnarray}
\label{1-4}
\begin{split}
(n+1)\Gamma_{B_n}(x)& =4(2n-1)(2n-3)(n-1)^2(n-2)\cdot x \cdot \Gamma_{B_{n-2}}(x)
\\ & \quad  +4(2n-1)(n-1)\Gamma_{B_{n-1}}(x)
\end{split}
\end{eqnarray}
with initial conditions $ \Gamma_{B_1}(x)=1,\Gamma_{B_2}(x)=4+2x.$

By (\ref{1-4}),  we have
\begin{eqnarray*}
\label{1-6}
&&  (n+1)\Gamma_{B_n}'(1) =4(2n-1)(2n-3)(n-1)^2(n-2) \cdot \Gamma_{B_{n-2}}'(1)
\\ && ~~~+4(2n-1)(2n-3)(n-1)^2(n-2)\cdot \Gamma_{B_{n-2}}(1)+4(2n-1)(n-1)\Gamma_{B_{n-1}}'(1).
\end{eqnarray*}
Dividing both sides of the above equality by $\Gamma_{B_n}(1)=(2n-1)!$, for $n\geq 3,$ one arrives at that
\begin{eqnarray}
\label{1-5}
  (n+1)\gamma_{avg}(B_n)\!&=& \! 2\gamma_{avg}(B_{n-1})+(n-1)\big(\gamma_{avg}(B_{n-2})+1\big).
\end{eqnarray}
By direct calculation, one sees
$\gamma_{avg}(B_1)=0, \gamma_{avg}(B_2)=\frac{1}{3}.$
For $n\leq 0$, we define $\gamma_{avg}(B_n)=0$ so that  (\ref{1-5}) holds for any integer  $n\geq 1.$
For simplicity  of  writing,  we use  $a_n$
to denote $\gamma_{avg}(B_n)$ in the rest of proof.
Thus, $a_n$ satisfies the following recurrence relation
\begin{eqnarray}
\label{1-1}
  a_n=\frac{2}{n+1}a_{n-1}+\frac{n-1}{n+1}(a_{n-2}+1)
\end{eqnarray}
with initial conditions $a_1=0, a_2=\frac{1}{3}.$

Using   $(\ref{1-1})$, we have
\begin{eqnarray*}
 (n+1)a_n=2a_{n-1}+(n-1)(a_{n-2}+1)
\end{eqnarray*}
and
\begin{eqnarray}
\label{1-10}
 \sum_{n\geq 1}(n+1)a_n t^n=2\sum_{n\geq 1}a_{n-1}t^n+\sum_{n\geq 1}(n-1)(a_{n-2}+1)t^n.
\end{eqnarray}
Let
$
u(t)=\sum_{n\geq 1}a_n t^n.
$
Then, by (\ref{1-10}),  we obtain
\begin{eqnarray*}
 \Big(t\cdot \sum_{n\geq 1}a_n t^n\Big)'&=&2t\cdot \sum_{n\geq 1}a_{n-1}t^{n-1}+\sum_{n\geq 1}(n-2)a_{n-2}t^n+\sum_{n\geq 1}a_{n-2}t^n+\sum_{n\geq 1}(n-1)t^n
 \\ &=& 2tu(t)+t^3 \sum_{n\geq 1}(n-2)a_{n-2}t^{n-3}+t^2u(t)+t^2 \cdot ( \sum_{n\geq 2}t^{n-1})',
\end{eqnarray*}
that is
\begin{eqnarray*}
  (tu(t))'&=& 2tu(t)+t^3\sum_{n\geq 3}(n-2)a_{n-2}t^{n-3}+t^2u(t)+t^2\Big(\frac{t}{1-t}\Big)'
  \\ &=&
  2tu(t)+t^3\sum_{n\geq 1}na_{n}t^{n-1}+t^2u(t)+t^2\Big(\frac{t}{1-t}\Big)'
  \\ &=&
  2tu(t)+t^3u'(t)+t^2u(t)+t^2\Big(\frac{t}{1-t}\Big)',
\end{eqnarray*}
which implies that   $u(t)$ satisfies the following
first order  linear  differential equation
\begin{eqnarray}
\label{e-1}
  u'(t)(t-t^3)+u(t)(1-2t-t^2)=\frac{t^2}{(1-t)^2}.
\end{eqnarray}
with initial condition $u(t)|_{t=0}=0.$
We solve this equation  using variation of parameters and obtain
its    solution
\begin{eqnarray*}
 u(t)=\frac{-\left(t^2-1\right) \ln (1-t)+\left(t^2-1\right) \ln (t+1)+2 t}{4 (t-1)^2 t}.
\end{eqnarray*}
Denote
\begin{eqnarray*}
  u_1(t)=\frac{1}{2 (t-1)^2},~u_2(t)=-\frac{(t+1) \ln (1-t)}{4 (t-1) t},~u_3(t)=\frac{(t+1) \ln (t+1)}{4 (t-1) t}.
\end{eqnarray*}
Then, we have
$
  u(t)=u_1(t)+u_2(t)+u_3(t).
$
By Taylor formula,  we obtain
\begin{eqnarray}
\label{c-1}
  u_1(t)&=&\sum_{n\geq 0}\frac{n+1}{2}t^n
\end{eqnarray}
and
\begin{eqnarray}
 \nonumber   u_2(t) &=& \frac{1}{4}(1+t)\cdot\frac{1}{1-t}\cdot
  \frac{\ln (1-t)}{t}
 =  \frac{1}{4}(1+t)\cdot \sum_{\ell\geq 0}t^{\ell}\cdot \sum_{m\geq 0}\big(-\frac{1}{m+1}t^m\big)
  \\  \label{1-8} &=& \frac{1}{4}(1+t)\cdot \sum_{n\geq 0} \sum_{m= 0}^n (-\frac{1}{m+1})t^n
  = \sum_{n\geq 0}b_nt^n,
\end{eqnarray}
where $b_0=-\frac{1}{4}$ and
$
b_n=\frac{1}{4}\Big[\sum_{m= 0}^n (-\frac{1}{m+1})+\sum_{m= 0}^{n-1} (-\frac{1}{m+1})\Big], n\geq 1.
$
For $u_3(t),$ we have
\begin{eqnarray}
 \nonumber u_3(t) &=& -\frac{1}{4}(1+t)\cdot\frac{1}{1-t}\cdot
  \frac{\ln (1+t)}{t}
 =  -\frac{1}{4}(1+t)\cdot \sum_{\ell\geq 0}t^{\ell}\cdot \sum_{m\geq 0}\frac{(-1)^m}{m+1}t^m
    \\  \label{1-9} &=& -\frac{1}{4}(1+t)\cdot \sum_{n\geq 0} \sum_{m= 0}^n \frac{(-1)^m }{m+1}t^n
 =\sum_{n\geq 0}c_nt^n,
\end{eqnarray}
where   $c_0=-\frac{1}{4}$ and
\begin{eqnarray*}
  c_n&=& -\frac{1}{4}\Big[\sum_{m= 0}^n \frac{(-1)^m}{m+1}+\sum_{m= 0}^{n-1} \frac{(-1)^m }{m+1})\Big],\quad n\geq 1.
\end{eqnarray*}
Combining  (\ref{c-1})-(\ref{1-9}), it holds that
\begin{eqnarray*}
a_n&=& \frac{n+1}{2}+b_n+c_n
  \\ &=& \frac{n+1}{2}+\frac{1}{4}\Big[\sum_{m= 0}^n (-\frac{1}{m+1})+\sum_{m= 0}^{n-1} (-\frac{1}{m+1})\Big]-\frac{1}{4}\Big[\sum_{m= 0}^n \frac{(-1)^m}{m+1}+\sum_{m= 0}^{n-1} \frac{(-1)^m }{m+1})\Big],
\end{eqnarray*}
which completes the proof of Theorem \ref{1-2}.
\end{proof}
\section{The average genus of $D_n$}
The main objective of this section  is to prove the following  theorem.
\begin{theorem}
\label{f-1}
$\gamma_{avg}(D_1)=\gamma_{avg}(D_2)=0$ and  for $n\geq 3$,  we have
\begin{eqnarray}
\nonumber && \!\gamma_{avg}(D_n)\!=\!
\sum_{m=4}^{n+1}\frac{ \Big(4 (-1)^m m^2+m^2-12 (-1)^m m-5 m+6 (-1)^m+6\Big)\cdot (n-m+2)}{2(m-3) (m-2) (m-1) m}
\\  \label{b-1}
\end{eqnarray}
In particular,
we have
  \begin{eqnarray}
  \label{b-2}
    \gamma_{avg}(D_n)=\frac{n-\ln n-c }{2}+o(1),
  \end{eqnarray}
  where $c\approx 0.5772$ is the Euler constant.
\end{theorem}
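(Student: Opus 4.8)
The plan is to follow the same route as in the proof of Theorem~\ref{1-2}, now starting from the genus-polynomial recurrence for $D_n$ recorded in \cite{KS02,Ri87}. First I would multiply that recurrence by $x^m$, sum over $m$ to obtain a recurrence for $\Gamma_{D_n}(x)$, then differentiate at $x=1$ and divide by $\Gamma_{D_n}(1)=((n-1)!)^2$; this converts it into a linear recurrence with $n$-dependent coefficients for $a_n:=\gamma_{avg}(D_n)$, with initial data $a_1=a_2=0$. Exactly as in the bouquet case the coefficients depend on $n$, so direct unwinding is hopeless and a generating function is the right tool.

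Next I would set $v(t)=\sum_{n\geq 1}a_n t^n$ and translate the recurrence into a single first-order linear ODE for $v$, in direct analogy with $(\ref{e-1})$; its singular points are again $t=0,\pm 1$, the point $t=-1$ being forced by the parity factor $(-1)^m$ that appears in $(\ref{b-1})$. Solving by variation of parameters yields a closed form for $v(t)$ built from $\ln(1-t)$ and $\ln(1+t)$ with rational prefactors. The new feature relative to the bouquet is an overall factor of order $(1-t)^{-2}$: expanding it against the logarithmic series produces the convolution weight $(n-m+2)$ seen in $(\ref{b-1})$, while collecting the two logarithmic contributions together with their partial-fraction prefactors gives exactly the numerator $4(-1)^m m^2+m^2-12(-1)^m m-5m+6(-1)^m+6$ over $2(m-3)(m-2)(m-1)m$.

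For the asymptotics $(\ref{b-2})$ I would split the summand into its non-alternating and alternating parts by writing the numerator as $(-1)^m(4m^2-12m+6)+(m-2)(m-3)$. The non-alternating piece collapses to $\tfrac{1}{2m(m-1)}$, and after substituting $(n-m+2)=(n+2)-m$ its weighted sum telescopes and supplies the entire $-\tfrac12\ln n-\tfrac{c}{2}$ together with a linear term $\tfrac{n}{6}$. For the alternating piece I would use the partial fraction $\tfrac{4m^2-12m+6}{m(m-1)(m-2)(m-3)}=-\tfrac1m-\tfrac1{m-1}+\tfrac1{m-2}+\tfrac1{m-3}$ and the same decomposition $(n-m+2)=(n+2)-m$; each of the four shifted alternating harmonic sums converges and individually carries a $\ln 2$, but in the combination these $\ln 2$ terms cancel and leave a clean linear contribution $\tfrac{n}{3}$. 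Since $\tfrac{n}{6}+\tfrac{n}{3}=\tfrac{n}{2}$ and no $\ln n$ or $c$ enters through the alternating part, this recovers the stated leading behaviour.

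I expect the main obstacle to lie in the bookkeeping of these last two stages rather than in any single idea: deriving and integrating the ODE is mechanical but considerably heavier than for $B_n$, and in the asymptotic analysis one must track the logarithmic and constant contributions across four separate alternating sums and verify that the $\ln 2$ terms in the coefficient of $n$ cancel exactly. Without that cancellation one would neither obtain the clean constant $-c/2$ nor the absence of a $\ln 2$ term that distinguishes $(\ref{b-2})$ from the bouquet asymptotic.
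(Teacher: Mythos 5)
Your overall route is the paper's route: pass from Riper's recurrence for $g_k(D_n)$ to a recurrence for $\Gamma_{D_n}(x)$, differentiate at $x=1$ and normalize to get a linear recurrence with $n$-dependent coefficients for $a_n=\gamma_{avg}(D_n)$, encode it in a generating function satisfying a linear ODE, solve, and read off Taylor coefficients. Your asymptotic analysis is also essentially what the paper does (split the numerator into $(m-2)(m-3)$ plus an alternating part), and your verification that the $\ln 2$ contributions cancel is correct and in fact more explicit than the paper's, which delegates the analogue of that computation to \emph{Mathematica}.

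There is, however, one genuine gap. The recurrence you would obtain is
$n(n+2)a_{n+1}=(2n+1)a_n+(n^2-1)a_{n-1}+n^2$, in which the coefficients of $a_{n+1}$ and $a_{n-1}$ are \emph{quadratic} in $n$. For an ordinary power series $\sum a_n t^{n+s}$ (any shift $s$), a quadratic coefficient forces a second derivative: one cannot write $n(n+2)=(n+1)^2-1$ as $(k+s)(k+s-1)$ plus lower-order terms using only one derivative. So there is no first-order linear ODE ``in direct analogy with (\ref{e-1})''; the equation is unavoidably second order, and indeed the paper works with $u(t)=\sum_{n\ge 1}a_nt^{n-3}$ satisfying
\begin{equation*}
(t^2-t^4)u''(t)+(7t-2t^2-9t^3)u'(t)+(8-7t-15t^2)u(t)=\frac{3t-4-t^2}{(t-1)^3}.
\end{equation*}
This is not a cosmetic point: your proposed solution method (variation of parameters, which for a first-order equation is just an integrating factor) does not apply until you have produced a fundamental system of solutions of the homogeneous second-order equation with polynomial coefficients, and that step is the real obstruction here --- the paper resolves it only ``with the help of a computer.'' You would need either to exhibit those homogeneous solutions explicitly (e.g.\ by guessing rational/logarithmic candidates suggested by the singularities at $t=0,\pm1$) or to verify the closed form for $u(t)$ a posteriori by substitution; as written, the plan stalls at the integration step. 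The remainder of your argument (extraction of the coefficients, the telescoping of the non-alternating part, and the cancellation of $\ln 2$) would go through once the closed form for the generating function is in hand.
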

\begin{proof}
Once we have obtained (\ref{b-1}), by the soft \emph{Mathematica}  or the series theory, we have
 \begin{eqnarray}
 \label{d-2}
 && \sum_{m=4}^{n+1}\frac{ 4 (-1)^m m^2+m^2-12 (-1)^m m-5 m+6 (-1)^m+6}{(m-3) (m-2) (m-1) m}=1-\frac{1}{n}+o(\frac{1}{n})
 \end{eqnarray}
 and
 \begin{eqnarray}
   \nonumber  && \sum_{m=4}^{n+1}\frac{ \Big(4 (-1)^m m^2+m^2-12 (-1)^m m-5 m+6 (-1)^m+6\Big)\cdot  (m-2)}{(m-3) (m-2) (m-1) m}
 \\ \nonumber && =\sum_{m=4}^{n+1}\frac{ \Big(4 (-1)^m m^2-12 (-1)^m m-5 m+6 (-1)^m+6\Big)\cdot  (m-2)}{(m-3) (m-2) (m-1) m}
   \\ \nonumber
   &&\quad +\sum_{m=4}^{n+1}\frac{m^2\cdot (m-2)}{(m-3) (m-2) (m-1) m}
   \\  \nonumber &&=-\frac{7}{4}+o(1)+\sum_{m=4}^{n+1}
   \frac{m}{(m-3)  (m-1)}
   \\  \nonumber &&=-\frac{7}{4}+o(1)+\sum_{m=4}^{n+1}
   \frac{1}{(m-3) }
    +\sum_{m=4}^{n+1}
   \frac{1}{(m-3) (m-1)}
     \\      \label{d-1}  &&=-\frac{7}{4}+o(1)+\sum_{m=4}^{n+1}
   \frac{1}{(m-3) }
    +\frac{3}{4}
         =-1+o(1)+\ln n+c,
  \end{eqnarray}
  where $c$ is the Euler constant.
  Combining (\ref{d-2})(\ref{d-1}), we complete the proof of (\ref{b-2}).

 Now we give a proof of (\ref{b-1}). By \cite[Theorem 5.2]{Ri87}, we have
\begin{eqnarray*}
  (n+2)g_k(D_{n+1})=n (2n+1) g_k(D_n)+n^3(n-1)^2g_{k-1}(D_{n-1})-n(n-1)^2g_k(D_{n-1})
\end{eqnarray*}
and
\begin{eqnarray*}
   \sum_{k\geq 0}(n+2)g_k(D_{n+1})x^k&=&n (2n+1) \sum_{k\geq 0}g_k(D_n)x^k+n^3(n-1)^2\sum_{k\geq 0}g_{k-1}(D_{n-1})x^k
  \\ && -n(n-1)^2\sum_{k\geq 0}g_k(D_{n-1})x^k,
\end{eqnarray*}
where $g_k(D_n)=0$ for $k=-1.$
Therefore, $\Gamma_{D_n}(x)$ satisfies   the following recurrence relation
  \begin{eqnarray}
  \label{a-1}
(n+2)\Gamma_{D_{n+1}}(x)-  n(2n+1)\Gamma_{D_{n}}(x)
  =n(n-1)^2 (n^2x-1) \cdot \Gamma_{D_{n-1}}(x)
\end{eqnarray}
with initial conditions
$\Gamma_{D_1}(x)=\Gamma_{D_2}(x)=1.$
Taking derivations on both sides of (\ref{a-1}) and setting $x=1$, one sees that
\begin{eqnarray*}
(n+2)\Gamma_{D_{n+1}}'(1)=  n(2n+1)\Gamma_{D_{n}}'(1)
  +n(n-1)^2 (n^2-1) \cdot \Gamma_{D_{n-1}}'(1)
  +n^3(n-1)^2  (n-2)!^2.
\end{eqnarray*}
Dividing both sides of the above equality  by $n!^2$, we obtain
  \begin{eqnarray*}
(n+2)\gamma_{avg}(D_{n+1})=  \frac{2n+1}{n}\gamma_{avg}(D_{n})
  + \frac{(n^2-1)}{n} \cdot \gamma_{avg}(D_{n-1})
  +n
\end{eqnarray*}
and
  \begin{eqnarray*}
n(n+2)\gamma_{avg}(D_{n+1})=  (2n+1)\gamma_{avg}(D_{n})
  + (n^2-1)\cdot \gamma_{avg}(D_{n-1})
  +n^2.
\end{eqnarray*}
 For simplicity  of  writing,  we use  $a_n$
to denote $\gamma_{avg}(D_n)$ in the rest of proof. Then, for  $n\geq 2$,  we have the following  recurrence relation for $a_n$
  \begin{eqnarray}
  \label{b-4}
&& n(n+2)a_{n+1}=  (2n+1)a_{n}
  + (n^2-1)a_{n-1}
  +n^2
\end{eqnarray}
with initial conditions $a_1=a_2=0.$

Let
$$
u(t)=\sum_{n\geq 1}a_nt^{n-3}=\sum_{n\geq 2}a_{n+1}t^{n-2}.
$$
Then, by (\ref{b-4}) and calculation, we have
 \begin{eqnarray}
 \label{b-5}
&& \sum_{n\geq 2}\!n(n+2)a_{n+1}t^{n-2}\!= \! \sum_{n\geq 2}\! (2n+1)a_{n}t^{n-2}
  + \!\sum_{n\geq 2}\!(n^2-\!1)a_{n-1}t^{n-2}
  +\! \sum_{n\geq 2}\! n^2t^{n-2}
\end{eqnarray}
and
\begin{eqnarray*}
  u'(t)&=&\sum_{n\geq 4}a_n (n-3)t^{n-4}=\sum_{n\geq 3}a_{n+1} (n-2)t^{n-3}=\sum_{n\geq 2}a_{n+1} (n-2)t^{n-3},
  \\ u''(t) &=& \sum_{n\geq 5}a_{n} (n-3)(n-4)t^{n-5}= \sum_{n\geq 4}a_{n+1} (n-2)(n-3)t^{n-4}=\sum_{n\geq 2}a_{n+1} (n-2)(n-3)t^{n-4}.
\end{eqnarray*}

Also, by calculation  and the above two equalities, we obtain
\begin{eqnarray*}
  \sum_{n\geq 2}n(n+2)a_{n+1}t^{n-2}
&=&\sum_{n\geq 2}\big[n^2-5n+6+7(n-2)+8\big]a_{n+1}t^{n-2}
  \\ &=& t^2 u''(t)+7t u'(t)+8u(t),
  \\
  \sum_{n\geq 2}(2n+1)a_{n}t^{n-2}&=&\sum_{n\geq 2}(2n+3)a_{n+1}t^{n-1}
  = \sum_{n\geq 2}\big(2(n-2)+7\big)a_{n+1}t^{n-1}
   \\ &=&  2t^2 u'(t)+7t u(t),
   \\
   \sum_{n\geq 2}(n^2-1)a_{n-1}t^{n-2}&=&\sum_{n\geq 4}(n^2-1)a_{n-1}t^{n-2}
=\sum_{n\geq 2}(n^2+4n+3)a_{n+1}t^{n}
  \\ &=&\sum_{n\geq 2}\Big[n^2-5n+6+9(n-2)+15\Big]a_{n+1}t^{n}
  \\& =& t^4u''(t)+9t^3 u'(t)+15t^2u(t),
 \\ \sum_{n\geq 2}n^2t^{n-2}&=& \sum_{n\geq 2}n(n-1)t^{n-2}+\sum_{n\geq 2}nt^{n-2}
 =v''(t)+\sum_{n\geq 0}nt^{n-2}-t^{-1}
  \\ &=&v''(t)+\frac{v'(t)}{t}-t^{-1} =\frac{3t-4-t^2}{(t-1)^3},
\end{eqnarray*}
where  $v(t)=\sum_{n\geq 0}t^n, v'(t)=\sum_{n\geq 0}nt^{n-1},v''(t)=\sum_{n\geq 0}n(n-1)t^{n-2}.$
Substituting the above equalities into (\ref{b-5}), by the definition of $u(t),$   one arrives at that $u(t)$ satisfies the following second  order linear  differential equation
\begin{eqnarray*}
 (t^2-t^4) u''(t)+(7t-2t^2-9t^3)u'(t)+(8-7t-15t^2)u(t)=\frac{3t-4-t^2}{(t-1)^3}
\end{eqnarray*}
with initial conditions
$u(0)=a_3=\gamma_{avg}(D_3)=\frac{1}{2}, u'(0)=a_4=\gamma_{avg}(D_4)=\frac{5}{6}.
$

With the help of a computer, the solution to the above equation is
\begin{eqnarray*}
&& u(t)=\frac{1}{4 (t-1) t^2}+\frac{v(t)}{4 (t-1)^2 t^4},
\end{eqnarray*}
where
\begin{eqnarray*}
v(t)&=& -t^3+2 t^3 \ln (t+1)+3 t^2-2 t^2 \ln (t+1)
\\ && -2 t \ln (1-t)-2 t \ln (t+1)+2 \ln (1-t)+2 \ln (t+1).
\end{eqnarray*}
By taylor formula, we have
\begin{eqnarray*}
  && \frac{1}{4 (t-1) t^2}= \sum_{m\geq -2}(-\frac{1}{4})t^m,
\\ &&  v(t)=t^2-t^3+\sum_{m\geq 4}\frac{2 \Big(4 (-1)^m m^2+m^2-12 (-1)^m m-5 m+6 (-1)^m+6\Big)}{(m-3) (m-2) (m-1) m}t^m,
\\
  && \frac{1}{4 (t-1)^2 t^4}=\sum_{m\geq -4}\frac{m+5}{4}t^m.
\end{eqnarray*}
Therefore,
\begin{eqnarray*}
  a_{n+3}&=&-\frac{1}{4}+\frac{n+3}{4}-\frac{n+2}{4}
  \\ &&+\sum_{m=4}^{n+4}\frac{2 \Big(4 (-1)^m m^2+m^2-12 (-1)^m m-5 m+6 (-1)^m+6\Big)}{(m-3) (m-2) (m-1) m}\cdot \frac{n-m+5}{4}
  \\ &=&\sum_{m=4}^{n+4}\frac{ 4 (-1)^m m^2+m^2-12 (-1)^m m-5 m+6 (-1)^m+6}{(m-3) (m-2) (m-1) m}\cdot \frac{n-m+5}{2}
  \\ &=& \frac{n}{2}\sum_{m=4}^{n+4}\frac{ 4 (-1)^m m^2+m^2-12 (-1)^m m-5 m+6 (-1)^m+6}{(m-3) (m-2) (m-1) m}
 \\ &&  -\sum_{m=4}^{n+4}\frac{ 4 (-1)^m m^2+m^2-12 (-1)^m m-5 m+6 (-1)^m+6}{(m-3) (m-2) (m-1) m}\cdot  \frac{m-5}{2},
\end{eqnarray*}
which yields the desired result (\ref{b-1}).
\end{proof}

\section{Some remarks}

Bouquets and dipoles are two important classes of graphs in topological graph theory. Their average genera are of independent interests. In this paper, we obtain explicit formulas for $\gamma_{avg}(B_n)$ and  $\gamma_{avg}(D_n).$ By Theorems \ref{1-2}, \ref{f-1}, we have the following relation between $\gamma_{avg}(B_n)$ and  $\gamma_{avg}(D_n),$
 \begin{eqnarray*}
    \gamma_{avg}(B_n)=\gamma_{avg}(D_n)+\frac{1-\ln 2}{2}+o(1).
  \end{eqnarray*}
  It follows that the difference of  $\gamma_{avg}(B_n)$ and   $\gamma_{avg}(D_n)$
   tends  to a constant  $\frac{1-\ln 2}{2}$ when $n$ tends to infinity.

Since both $B_n$ and $D_n$ are upper-embeddable,  the  maximum genera of $B_n$  and $D_n$  are  $\left\lfloor \frac{n}{2}\right\rfloor$ and $\left\lfloor\frac{n-1}{2}\right\rfloor$, respectively. Recall that the
 minimum genera of $B_n$ and $D_n$ equal $0.$
 Therefore, also by Theorems \ref{1-2}, \ref{f-1},   we have
 \begin{eqnarray*}
   \lim_{n\rightarrow \infty}\frac{\gamma_{avg}(B_n)}{\lfloor \frac{n}{2}\rfloor }=1 \text{ and } \lim_{n\rightarrow \infty}\frac{\gamma_{avg}(D_n)}{\lfloor \frac{n-1}{2}\rfloor }=1.
 \end{eqnarray*}
This   imply  that   the average genus of  $B_n$ $(D_n)$ is more closer to the maximum genus than to the minimum genus.

\end{document}